\documentclass[12
pt]{amsart}
\usepackage{amssymb,float}
\usepackage[colorlinks=true]{hyperref}
\usepackage[all]{xy}
\usepackage[toc,page,title,titletoc,header]{appendix}
\usepackage{xcolor}
\usepackage{tikz-cd}
\setcounter{secnumdepth}{2}
\setcounter{tocdepth}{1}
\usepackage{graphicx}
\usepackage{epigraph}

\begin{document}
	\pdfoutput=1
	\theoremstyle{plain}
	\newtheorem{thm}{Theorem}[section]
	\newtheorem*{thm1}{Theorem 1}
	
	\newtheorem*{thmM}{Main Theorem}
	\newtheorem*{thmA}{Theorem A}
	\newtheorem*{thm2}{Theorem 2}
	\newtheorem{lemma}[thm]{Lemma}
	\newtheorem{lem}[thm]{Lemma}
	\newtheorem{cor}[thm]{Corollary}
	\newtheorem{pro}[thm]{Proposition}
	\newtheorem{prop}[thm]{Proposition}
	\newtheorem{variant}[thm]{Variant}
	\theoremstyle{definition}
	\newtheorem{notations}[thm]{Notations}
	\newtheorem{rem}[thm]{Remark}
	\newtheorem{rmk}[thm]{Remark}
	\newtheorem{rmks}[thm]{Remarks}
	\newtheorem{defi}[thm]{Definition}
	\newtheorem{exe}[thm]{Example}
	\newtheorem{claim}[thm]{Claim}
	\newtheorem{ass}[thm]{Assumption}
	\newtheorem{prodefi}[thm]{Proposition-Definition}
	\newtheorem{que}[thm]{Question}
	\newtheorem{con}[thm]{Conjecture}
	
	\newtheorem{exa}[thm]{Example}
	\newtheorem*{assa}{Assumption A}
	\newtheorem*{algstate}{Algebraic form of Theorem \ref{thmstattrainv}}
	
	\newtheorem*{dmlcon}{Dynamical Mordell-Lang Conjecture}
	\newtheorem*{condml}{Dynamical Mordell-Lang Conjecture}
	\newtheorem*{congb}{Geometric Bogomolov Conjecture}
	\newtheorem*{congdaocurve}{Dynamical Andr\'e-Oort Conjecture for curves}
	
	\newtheorem*{pdd}{P(d)}
	\newtheorem*{bfd}{BF(d)}

	\newtheorem*{probreal}{Realization problems}
	\numberwithin{equation}{section}
	\newcounter{elno}                
	\def\points{\list
		{\hss\llap{\upshape{(\roman{elno})}}}{\usecounter{elno}}}
	\let\endpoints=\endlist
	\newcommand{\SH}{\rm SH}
	\newcommand{\Cov}{\rm Cov}
	\newcommand{\Tan}{\rm Tan}
	\newcommand{\res}{\rm res}
	\newcommand{\Om}{\Omega}
	\newcommand{\om}{\omega}
	\newcommand{\La}{\Lambda}
	\newcommand{\la}{\lambda}
	\newcommand{\mc}{\mathcal}
	\newcommand{\mb}{\mathbb}
	\newcommand{\surj}{\twoheadrightarrow}
	\newcommand{\inj}{\hookrightarrow}
	\newcommand{\zar}{{\rm zar}}
	\newcommand{\Exc}{{\rm Exc}}
	\newcommand{\Mod}{{\rm Mod}}
	\newcommand{\an}{{\rm an}}
	\newcommand{\red}{{\rm red}}
	\newcommand{\codim}{{\rm codim}}
	\newcommand{\Supp}{{\rm Supp\;}}
	\newcommand{\Leb}{{\rm Leb}}
	\newcommand{\rank}{{\rm rank}}
	\newcommand{\Ker}{{\rm Ker \ }}
	\newcommand{\Pic}{{\rm Pic}}
	\newcommand{\Der}{{\rm Der}}
	\newcommand{\Div}{{\rm Div}}
	\newcommand{\Hom}{{\rm Hom}}
	\newcommand{\Corr}{{\rm Corr}}
	\newcommand{\im}{{\rm im}}
	\newcommand{\Spec}{{\rm Spec \,}}
	\newcommand{\Nef}{{\rm Nef \,}}
	\newcommand{\Frac}{{\rm Frac \,}}
	\newcommand{\Sing}{{\rm Sing}}
	\newcommand{\sing}{{\rm sing}}
	\newcommand{\reg}{{\rm reg}}
	\newcommand{\Char}{{\rm char\,}}
	\newcommand{\Tr}{{\rm Tr}}
	\newcommand{\ord}{{\rm ord}}
	\newcommand{\bif}{{\rm bif}}
	\newcommand{\AS}{{\rm AS}}
	\newcommand{\FS}{{\rm FS}}
	\newcommand{\CE}{{\rm CE}}
	\newcommand{\PCE}{{\rm PCE}}
	\newcommand{\WR}{{\rm WR}}
	\newcommand{\PR}{{\rm PR}}
	\newcommand{\TCE}{{\rm TCE}}
	\newcommand{\diam}{{\rm diam\,}}
	\newcommand{\id}{{\rm id}}
	\newcommand{\NE}{{\rm NE}}
	\newcommand{\Gal}{{\rm Gal}}
	\newcommand{\Min}{{\rm Min \ }}
	\newcommand{\Hol}{{\rm Hol \ }}
	\newcommand{\Rat}{{\rm Rat}}
	\newcommand{\FL}{{\rm FL}}
	\newcommand{\fm}{{\rm fm}}
	
	\newcommand{\Max}{{\rm Max \ }}
	\newcommand{\Alb}{{\rm Alb}\,}
	\newcommand{\Aff}{{\rm Aff}\,}
	\newcommand{\GL}{{\rm GL}\,}        
	\newcommand{\PGL}{{\rm PGL}\,}
	\newcommand{\Bir}{{\rm Bir}}
	\newcommand{\Aut}{{\rm Aut}}
	\newcommand{\End}{{\rm End}}
	\newcommand{\Bel}{{\rm Bel}}
	\newcommand{\QC}{{\rm QC}}

	\newcommand{\Per}{{\rm Per}\,}
	\newcommand{\Preper}{{\rm Preper}\,}
	\newcommand{\ie}{{\it i.e.\/},\ }
	\newcommand{\niso}{\not\cong}
	\newcommand{\nin}{\not\in}
	\newcommand{\soplus}[1]{\stackrel{#1}{\oplus}}
	\newcommand{\by}[1]{\stackrel{#1}{\rightarrow}}
	\newcommand{\longby}[1]{\stackrel{#1}{\longrightarrow}}
	\newcommand{\vlongby}[1]{\stackrel{#1}{\mbox{\large{$\longrightarrow$}}}}
	\newcommand{\ldownarrow}{\mbox{\Large{\Large{$\downarrow$}}}}
	\newcommand{\lsearrow}{\mbox{\Large{$\searrow$}}}
	\renewcommand{\d}{\stackrel{\mbox{\scriptsize{$\bullet$}}}{}}
	\newcommand{\dlog}{{\rm dlog}\,}    
	\newcommand{\longto}{\longrightarrow}
	\newcommand{\vlongto}{\mbox{{\Large{$\longto$}}}}
	\newcommand{\limdir}[1]{{\displaystyle{\mathop{\rm lim}_{\buildrel\longrightarrow\over{#1}}}}\,}
	\newcommand{\liminv}[1]{{\displaystyle{\mathop{\rm lim}_{\buildrel\longleftarrow\over{#1}}}}\,}
	\newcommand{\norm}[1]{\mbox{$\parallel{#1}\parallel$}}
	\newcommand{\boxtensor}{{\Box\kern-9.03pt\raise1.42pt\hbox{$\times$}}}
	\newcommand{\into}{\hookrightarrow}
	\newcommand{\image}{{\rm image}\,}
	\newcommand{\Lie}{{\rm Lie}\,}      
	\newcommand{\CM}{\rm CM}
	\newcommand{\Teich}{\rm Teich\;}
	\newcommand{\sext}{\mbox{${\mathcal E}xt\,$}}  
	\newcommand{\shom}{\mbox{${\mathcal H}om\,$}}  
	\newcommand{\coker}{{\rm coker}\,}  
	\newcommand{\sm}{{\rm sm}}
	\newcommand{\pgcd}{\text{pgcd}}
	\newcommand{\trd}{\text{tr.d.}}
	\newcommand{\tensor}{\otimes}
	\newcommand{\hotimes}{\hat{\otimes}}
	
	\newcommand{\CH}{{\rm CH}}
	\newcommand{\tr}{{\rm tr}}
	\newcommand{\e}{\rm SH}
	
	\renewcommand{\iff}{\mbox{ $\Longleftrightarrow$ }}
	\newcommand{\supp}{{\rm supp}\,}
	\newcommand{\ext}[1]{\stackrel{#1}{\wedge}}
	\newcommand{\onto}{\mbox{$\,\>>>\hspace{-.5cm}\to\hspace{.15cm}$}}
	\newcommand{\propsubset}
	{\mbox{$\textstyle{
				\subseteq_{\kern-5pt\raise-1pt\hbox{\mbox{\tiny{$/$}}}}}$}}
	\newcommand{\sA}{{\mathcal A}}
	\newcommand{\sB}{{\mathcal B}}
	\newcommand{\sC}{{\mathcal C}}
	\newcommand{\sD}{{\mathcal D}}
	\newcommand{\sE}{{\mathcal E}}
	\newcommand{\sF}{{\mathcal F}}
	\newcommand{\sG}{{\mathcal G}}
	\newcommand{\sH}{{\mathcal H}}
	\newcommand{\sI}{{\mathcal I}}
	\newcommand{\sJ}{{\mathcal J}}
	\newcommand{\sK}{{\mathcal K}}
	\newcommand{\sL}{{\mathcal L}}
	\newcommand{\sM}{{\mathcal M}}
	\newcommand{\sN}{{\mathcal N}}
	\newcommand{\sO}{{\mathcal O}}
	\newcommand{\sP}{{\mathcal P}}
	\newcommand{\sQ}{{\mathcal Q}}
	\newcommand{\sR}{{\mathcal R}}
	\newcommand{\sS}{{\mathcal S}}
	\newcommand{\sT}{{\mathcal T}}
	\newcommand{\sU}{{\mathcal U}}
	\newcommand{\sV}{{\mathcal V}}
	\newcommand{\sW}{{\mathcal W}}
	\newcommand{\sX}{{\mathcal X}}
	\newcommand{\sY}{{\mathcal Y}}
	\newcommand{\sZ}{{\mathcal Z}}
	\newcommand{\A}{{\mathbb A}}
	\newcommand{\B}{{\mathbb B}}
	\newcommand{\C}{{\mathbb C}}
	\newcommand{\D}{{\mathbb D}}
	\newcommand{\E}{{\mathbb E}}
	\newcommand{\F}{{\mathbb F}}
	\newcommand{\G}{{\mathbb G}}
	\newcommand{\HH}{{\mathbb H}}
	\newcommand{\LL}{{\mathbb L}}
	\newcommand{\J}{{\mathbb J}}
	\newcommand{\M}{{\mathbb M}}
	\newcommand{\N}{{\mathbb N}}
	\renewcommand{\P}{{\mathbb P}}
	\newcommand{\Q}{{\mathbb Q}}
	\newcommand{\R}{{\mathbb R}}
	\newcommand{\T}{{\mathbb T}}
	\newcommand{\U}{{\mathbb U}}
	\newcommand{\V}{{\mathbb V}}
	\newcommand{\W}{{\mathbb W}}
	\newcommand{\X}{{\mathbb X}}
	\newcommand{\Y}{{\mathbb Y}}
	\newcommand{\Z}{{\mathbb Z}}
	
	\newcommand{\bk}{{\mathbf{k}}}
	
	\newcommand{\bp}{{\mathbf{p}}}
	\newcommand{\ep}{\varepsilon}
	\newcommand{\bbk}{{\overline{\mathbf{k}}}}
	\newcommand{\Fix}{\mathrm{Fix}}
	
	\newcommand{\tor}{{\mathrm{tor}}}
	\renewcommand{\div}{{\mathrm{div}}}
	
	\newcommand{\trdeg}{{\mathrm{trdeg}}}
	\newcommand{\Stab}{{\mathrm{Stab}}}
	
	\newcommand{\OK}{{\overline{K}}}
	\newcommand{\ok}{{\overline{k}}}
	
	\newcommand{\cf}{{\color{red} [c.f. ?]}}
	\newcommand{\jy}{\color{red} jy:}

	\title[]{The moduli space of a rational map is Carath\'eodory hyperbolic}
	
	\author{Zhuchao Ji}
	
	\address{Institute for Theoretical Sciences, Westlake University, Hangzhou 310030, China}
	
	\email{jizhuchao@westlake.edu.cn}
	
	\author{Junyi Xie}

	
	\address{Beijing International Center for Mathematical Research, Peking University, Beijing 100871, China}
	
	\email{xiejunyi@bicmr.pku.edu.cn}

	
	\date{\today}

	\bibliographystyle{alpha}
	
	\maketitle
	
	\begin{abstract}
		Let $f$ be a rational map of degree $d\geq 2$. The moduli space $\mathcal{M}_f$, introduced by McMullen and Sullivan, is a complex analytic space  consisting all quasiconformal conjugacy classes of $f$.  For $f$ that is not flexible Latt\`es, we show that there is a normal affine variety $X_f$ of dimension $2d-2$ and a holomorphic injection $i:\mathcal{M}_f\to X_f$ such that $i(\mathcal{M}_f)$ is precompact in $X_f$. In particular $\mathcal{M}_f$ is 	Carath\'eodory hyperbolic (i.e. bounded holomorphic functions separate points in $\mathcal{M}_f$), provided that $f$ is not flexible Latt\`es.  This solves a conjecture of McMullen.  When $d\geq 4$,  we give a concrete construction of $X_f$ as the normalization of the Zariski closure of the  image of the reciprocal  multiplier spectrum morphism. 
	\end{abstract}

	\section{Introduction}
	\subsection{The dynamical Teichm\"uller space and moduli space of a rational map}
		Let $f\in\Rat_d$ be a rational map of degree $d\geq 2$ on the Riemann sphere $\P^1(\C)$.  McMullen and Sullivan introduced the {\em Teichm\"uller space} and {\em moduli space} for a rational map $f$ \cite{mcmullen1998quasiconformal}. These two spaces are important in  complex dynamics. For example, the Hyperbolic Conjecture claims that hyperbolic rational maps are dense in  $\Rat_d$. Using these two spaces and the Teichm\"uller  theory they developed in \cite{mcmullen1998quasiconformal}, McMullen and Sullivan showed that the Hyperbolic Conjecture is equivalent to that a non-Latt\`es rational map carries no invariant line field on its Julia set. Roughly speaking, the moduli space $\sM_f$ is a complex analytic space containing all quasiconformal conjugacy classes of $f$, and the Teichm\"uller space $\sT_f$ is the ``universal cover" of $\sM_f$.

		Let us make a precise definition.  Let $\Bel(f)$ be the set of  $L^\infty$ Beltrami  differentials $\mu$ invariant under $f$ such that  $\|\mu\|_\infty<1$. Let  $\QC(f)$ be the group of quasiconformal homeomorphisms commuting with $f$, and let $\QC_0(f)$ be the normal subgroup of the elements in $\QC(f)$ that are isotopic to identity. The {\em modular group} of $f$ is defined by the quotient $$\Mod(f):=\QC(f)/\QC_0(f).$$
		
		The Teichm\"uller space $\sT_f$ is defined by   $\Bel(f)$ quotiented by the right action of $\QC_0(f)$ by precomposition.  McMullen and Sullivan \cite{mcmullen1998quasiconformal} showed that the Teichm\"uller space $\sT_f$  is a complex manifold biholomorphic to a contractable bounded domain in $\C^N$, the modular group $\Mod(f)$ acts properly discontinuously on $\sT_f$, and the moduli space $\sM_f$ is defined as the complex analytic space $$\sM_f:=\sT_f/\Mod(f),$$
		hence $\sT_f$ serves as the ``universal cover" of $\sM_f$.
		\par Let $[\mu]\in \sM_f$, where $\mu$ is a Beltrami  differentials invariant under $f$ such that  $\|\mu\|_\infty<1$.  By Measurable Riemann Mapping theorem,  there is a quasiconformal homeomorphism $\phi$ solving the Beltrami  equation  
		$$\frac{\partial \phi}{\partial \overline{z}}=\mu \frac{\partial \phi}{\partial z}.$$
        The map $g:=\phi^{-1}f\phi$ is a rational map of degree $d$.  Let $[g]$ be the $\PGL_2(\C)$ conjugacy class of $g$ in the moduli space $\sM_d$ of  all degree $d$ rational maps.  The map \begin{align}\label{map}
        	\Psi: \sM_f&\to \sM_d,\\
        	[\mu]&\mapsto[g], \nonumber
        \end{align} is well defined and  is a holomorphic injection \cite{mcmullen1998quasiconformal}, moreover $\Psi(\sM_f)$ is the set of all  $\PGL_2(\C)$ conjugacy classes that are quasiconformally conjugate   to $f$. We refer the readers to \cite{astorg2017teichmuller} for more details. 
    \medskip
    \par Unlike the Teichm\"uller space $\sT_f$, not much of the complex structure of the moduli space $\sM_f$ are known.  A complex analytic space $X$ is called {\em Carath\'eodory hyperbolic}  if bounded holomorphic functions  separate points in $X$, i.e. for every $x\neq y$ in $X$, there is a bounded holomorphic function $\phi:X\to \C$ such that $\phi(x)\neq \phi(y)$.  Carath\'eodory hyperbolicity is a strong hyperbolicity condition which implies Kobayashi hyperbolicity \cite[Proposition 3.1.7 (1)]{kobayashi1998hyperbolic}.  As examples, bounded domains in $\C^N$ are Carath\'eodory hyperbolic.   
    
    McMullen \cite[Page 473]{McMullen1987} made the  following conjecture in 1987:
    \begin{con}[McMullen]\label{mcmullen}
    Let $f$ be arational map of degree $d\geq 2$ which is  not flexible Latt\`es, then $\mathcal{M}_f$ is 	Carath\'eodory hyperbolic.
    \end{con}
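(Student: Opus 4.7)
The plan is to prove a stronger quantitative statement that formally implies Carath\'eodory hyperbolicity: I will construct a normal affine variety $X_f$ of dimension $2d-2$ together with a holomorphic injection $i:\sM_f\to X_f$ whose image $i(\sM_f)$ is relatively compact in $X_f$. Granting this, the regular (polynomial) functions on $X_f$ restrict to bounded holomorphic functions on the precompact set $i(\sM_f)$, and they separate points of $X_f$ since $X_f$ is affine. Pulling back via the injection $i$ yields bounded holomorphic functions on $\sM_f$ separating any two prescribed points.

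For $d\geq 4$ I would construct $X_f$ explicitly via the \emph{reciprocal multiplier spectrum} morphism. For each $n\geq 1$ the multipliers of period-$n$ cycles of a degree-$d$ rational map $g$ form an unordered tuple, invariant under $\PGL_2(\C)$-conjugation, so their elementary symmetric functions---or equivalently those of their reciprocals---define a morphism of algebraic varieties $\sigma_n:\sM_d\to \A^{N_n}$. Assembling a sufficient collection of such $\sigma_n$ yields a morphism $\sigma:\sM_d\to \A^N$. I would then let $X_f$ be the normalization of the Zariski closure of $\sigma(\Psi(\sM_f))$, and take $i$ to be the lift of $\sigma\circ\Psi$ through the normalization. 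The target dimension $2d-2$ should match the dimension of $\sM_d$ itself, reflecting that $\sigma$ is generically finite.

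For relative compactness, the key input is that quasiconformal conjugacy controls multipliers in a H\"older-type fashion: if $g=\phi^{-1}f\phi$ with $\phi$ of bounded dilatation $K$, then at corresponding repelling periodic points one has $K^{-1}\log|\lambda_f|\leq \log|\lambda_g|\leq K\log|\lambda_f|$, while the multipliers at the finitely many attracting or super-attracting cycles are outright QC-invariant by Koenigs' and B\"ottcher's theorems. Because non-Latt\`es rational maps have only finitely many non-repelling cycles and the repelling multipliers have modulus $>1$, the reciprocals $1/\lambda$ remain confined to a bounded subset of $\C$ uniformly over the QC-conjugacy class. Therefore every coordinate of $\sigma(\Psi([\mu]))$ stays bounded as $[\mu]$ varies in $\sM_f$, and $i(\sM_f)$ lies inside a bounded, hence relatively compact, subset of $X_f$.

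The main obstacle will be the injectivity of $i$, which is a global rigidity statement: one must show that within the QC-conjugacy class of a non-flexible-Latt\`es $f$ the multiplier symmetric functions (in sufficiently many periods) distinguish $\PGL_2(\C)$-conjugacy classes, with the flexible Latt\`es case carved out precisely because there multiplier spectra are blind to the continuous family of QC deformations. Intertwining known finite-to-one multiplier rigidity on $\sM_d$ with the McMullen--Sullivan injection $\Psi$, and then passing to the normalization to resolve residual finite ambiguity, is the technically most delicate step. A secondary obstacle is the small-degree case $d\in\{2,3\}$, where the reciprocal multiplier map may fail to be generically injective on $\sM_d$ and an alternative, less concrete construction of $X_f$ must be produced; the abstract part of the theorem has to rely on QC-invariant data beyond multipliers.
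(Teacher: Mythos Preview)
Your overall architecture---embed $\sM_f$ as a precompact subset of a normal affine variety via reciprocal multipliers and pull back coordinate functions---matches the paper, and your boundedness argument is essentially right. Two corrections: the reciprocal multiplier map is not defined on all of $\sM_d$ (super-attracting cycles give multiplier $0$), and attracting multipliers are \emph{not} QC-invariant. The paper handles both issues at once by choosing the starting period $n$ large enough that every attracting cycle of $f$ has period $<n$; since the number and periods of attracting cycles are constant along a structurally stable family, $\Psi(\sM_f)$ then lands in the locus $\sM_d\setminus Z_{n,N}$ where $\tau_n$ is defined, and all relevant reciprocals have modulus $\leq 1$.

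The real gap is injectivity. Taking $X_f$ to be the normalization of the Zariski closure of the image and hoping to ``resolve residual finite ambiguity'' by normalizing does not work: normalization separates local analytic branches at singular points of the target, but it cannot undo a morphism that is honestly $k$-to-$1$ over a smooth point. Your route therefore needs generic injectivity of the multiplier spectrum on $\sM_d$, which is a hard recent result known only for $d\geq 4$. The paper sidesteps this entirely via Grothendieck's form of Zariski's Main Theorem: McMullen's rigidity makes $\tau_n$ \emph{quasi-finite} on $\sM_d\setminus(Z_{n,N}\cup\FL_d)$, so it factors as an \emph{open immersion} $\eta$ into a normal affine variety $Y$ followed by a \emph{finite} morphism $\tilde{\tau}_n:Y\to\C^l$. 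Then $i:=\eta\circ\Psi$ is injective simply because $\eta$ and $\Psi$ are, and $i(\sM_f)$ is precompact because $\tilde{\tau}_n$ is proper and $\tau_n(\Psi(\sM_f))$ is bounded. This works uniformly for all $d\geq 2$ using only multiplier data, contrary to your guess that $d\in\{2,3\}$ would require other invariants; your normalization-of-the-image picture is exactly what the paper recovers \emph{a posteriori} for $d\geq 4$ once generic injectivity is available.
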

	Here a rational map of degree $d\geq 2$  is called \emph{Latt\`es} if it is semi-conjugate to an endomorphism on an elliptic curve. A Latt\`es map $f$ is called \emph{flexible Latt\`es} if one can continuously vary the complex structure of the elliptic curve to get a family of Latt\`es maps passing through $f$. The structure of flexible Latt\`es maps is well-understood \cite[Section 5]{milnor2006lattes}, and the flexible Latt\`es locus in $\sM_d$ is either empty (when $d$ is not a square) or being an algebraic curve with  at most two connected components (when $d$ is a square). When $f$ is 
    flexible Latt\`es, then $V:=\Psi(\sM_f)$ contains the connected component of $\FL_d$ containing $[f]$ (actually $V$ is equal to this connected component, see Lemma \ref{lattesstable}). Since $V$ is quasi-projective, by Riemann's
    extension theorem,   bounded holomorphic function on $V$ are constant, hence $\sM_f$  is {\bf not} Carath\'eodory hyperbolic. So that the condition that $f$ is not flexible Latt\`es in Conjecture \ref{mcmullen} can not be dropped.
    
    \subsection{Main results}
		The purpose of this paper is to solve McMullen's conjecture. In fact we shall prove a stronger statement. We first recall the notion of {\em structural stability}. Let $(f_t)_{t\in X}$ be a holomorphic family of degree $d$  rational maps parametrized by a complex analytic space $X$.  The family is called structurally stable if periodic points does not change their types (attracting, repelling or indifferent) in this family, this is equivalent to that $f_t$ are all quasiconformally conjugate on their Julia sets \cite[Theorem 4.2]{mcmullen2016complex}.  Note that  structural stability is a local property.
		
		Let $X$ be a complex analytic space and let $\Phi:X\to \sM_d$ be a holomorphic map. It is  not always possible to lift $\Phi$ to a map taking image in $\Rat_d$ (the space of degree $d$ rational maps). However we can always lift $\Phi$ locally.  
		
		We define the map $\Phi:X\to \sM_d$ to be {\em structurally stable} if  every local lifts $\tilde{\Phi}$  defines   a structurally stable family of rational maps.  By definition, the map $\Psi$ in (\ref{map}) is structurally stable. We show the following result which implies Conjecture \ref{mcmullen}.
		\begin{thm}\label{main1}
		Let $d\geq 2$. Let $X$ be a connected complex analytic space and let $\Phi:X\to \sM_d$ be a holomorphic injective map which is  structurally stable.  Assume that there exists $t\in X$ such that $\Phi(t)\notin \FL_d$. Then  there is a normal affine variety $Y$ of dimension $2d-2$ and a holomorphic injection $i:X\to Y$ such that $i(X)$ is precompact in $Y$. In particular $X$  is 	Carath\'eodory hyperbolic.
		\end{thm}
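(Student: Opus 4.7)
The plan is to realize $X$ inside an affine algebraic variety $Y$ via a variant of the \emph{multiplier spectrum} morphism on $\sM_d$, in such a way that (i) the invariant separates non-flexible-Latt\`es $\PGL_2(\C)$-conjugacy classes and (ii) on the structurally stable family $\Phi(X)$ the invariant takes values in a bounded subset of $Y$.

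For (i), for each $n\geq 1$ and $f\in\Rat_d$ I would form the polynomial
\[
P_n(f,z) \;:=\; \prod_i \bigl(1 - \lambda_i^{(n)} z\bigr),
\]
whose coefficients are elementary symmetric polynomials in the multipliers $\lambda_i^{(n)}$ of the $n$-periodic cycles of $f$ and therefore descend to regular functions on $\sM_d$. Assembling them for $n=1,\dots,N$ with $N$ large yields a morphism $\sigma^{\ast}:\sM_d\to\A^M$, the \emph{reciprocal multiplier spectrum}. Let $Y$ be the normalization of the Zariski closure of $\sigma^{\ast}(\sM_d)$ in $\A^M$ and set $i:=\sigma^{\ast}\circ\Phi:X\to Y$. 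Since $\sigma^{\ast}$ should be generically finite on $\sM_d$, one has $\dim Y = \dim \sM_d = 2d-2$. Injectivity of $i$ then follows by combining the assumed injectivity of $\Phi$, the rigidity statement that $\sigma^{\ast}$ separates points of $\sM_d\setminus\FL_d$ (a refinement of McMullen's theorem, explicit for $d\geq 4$ as the abstract indicates), and Lemma \ref{lattesstable}, which should promote the hypothesis that $\Phi(t)\notin\FL_d$ for some $t$ to $\Phi(X)\cap\FL_d=\emptyset$ along the structurally stable $\Phi$.

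For (ii), structural stability locally confines each multiplier $\lambda_i^{(n)}(t)$ to its initial region $\{|\lambda|<1\}$, $\{|\lambda|=1\}$, or $\{|\lambda|>1\}$. To convert this local type-preservation into a global uniform bound on $|\lambda_i^{(n)}(t)|$ across $X$, the natural route is via the Ma\~n\'e--Sad--Sullivan holomorphic motion of $J(f_t)$ induced by structural stability and its Slodkowski extension to $\P^1(\C)$, whose dilation bounds yield classical multiplier estimates. Alternatively, one can try to choose a ``good'' affine chart on a projective compactification of $\sigma^{\ast}(\sM_d)$ in which the hyperplane at infinity corresponds to the bifurcation locus, so that structurally stable families automatically land in its bounded complement. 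Either way, the coefficients of $P_n(f_t,z)$ are uniformly bounded in $t\in X$, so $i(X)\subset\A^M$ is bounded and hence precompact in $Y$. Carath\'eodory hyperbolicity of $X$ then follows by pulling back the affine coordinates on $Y$, which give bounded holomorphic functions separating points of $X$.

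The main obstacle is precisely step (ii): converting the local content of structural stability into a global boundedness statement over an arbitrary complex analytic parameter space $X$, which may well be noncompact. The rigidity injectivity for $\sigma^{\ast}$ on $\sM_d\setminus\FL_d$ in step (i) is a substantial secondary obstacle, particularly in low degree where even the standard multiplier-spectrum embedding is delicate and some additional separating invariants may be required.
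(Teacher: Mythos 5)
Your overall skeleton (reciprocal multiplier spectrum, bounded image in an affine target, pull back coordinates) matches the paper, but both of the steps you flag as obstacles are resolved in the paper by mechanisms different from the ones you propose, and the ones you propose would not work. For injectivity of $i$: you want $Y$ to be the normalization of the closure of the image and you invoke "the rigidity statement that $\sigma^{\ast}$ separates points of $\sM_d\setminus\FL_d$." No such statement is available: McMullen's theorem gives only \emph{quasi-finiteness} (finite fibers) of the multiplier spectrum off $\FL_d$, and even generic injectivity is only known for $d\geq 4$ (that is exactly why the paper's Theorem~\ref{main2}, which realizes $Y$ as the normalization of the image, is restricted to $d\geq 4$, while Theorem~\ref{main1} claims all $d\geq 2$). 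The paper sidesteps separation of points entirely: it applies the generalized Zariski Main Theorem to the quasi-finite morphism $\tau_n$ to produce a normal affine $Y$, an \emph{open immersion} $\eta:\sM_d\setminus(Z_{n,N}\cup\FL_d)\to Y$, and a finite $\tilde\tau_n:Y\to\C^l$ with $\tau_n=\tilde\tau_n\circ\eta$. Then $i=\eta\circ\Phi$ is injective simply because $\eta$ is an open immersion and $\Phi$ is injective by hypothesis, and $i(X)\subset\tilde\tau_n^{-1}(\tau_n\circ\Phi(X))$ is precompact because $\tilde\tau_n$ is proper. Without this (or an equivalent device) your $i$ need not be injective.

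For boundedness, the route through holomorphic motions and Slodkowski's theorem aims at a "global uniform bound on $|\lambda_i^{(n)}(t)|$ across $X$," which is false: repelling multipliers are unbounded over a noncompact structurally stable family. The actual mechanism is elementary and does not use holomorphic motions at all. Structural stability bounds the set of attracting cycles along the family, so one can choose $n$ so large that \emph{every} cycle of exact period in $[n,N]$ is non-attracting for every $t\in X$; hence each such multiplier satisfies $|\lambda|\geq 1$, each reciprocal $\alpha=1/\lambda$ lies in the closed unit disk, and the elementary symmetric functions $\sigma_k(\alpha_1,\dots,\alpha_{N_j})$ are bounded by binomial coefficients, uniformly in $t$. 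This is also why the low periods must be discarded (attracting cycles have $|1/\lambda|>1$, unbounded near super-attracting parameters) and why the super-attracting locus $Z_{n,N}$ must be excised — which your $P_n(f,z)=\prod_i(1-\lambda_i^{(n)}z)$ does not do: as written its coefficients are symmetric functions of the $\lambda_i$ themselves, not of their reciprocals, and are unbounded on repelling cycles. Finally, your use of Lemma~\ref{lattesstable} to upgrade $\Phi(t_0)\notin\FL_d$ to $\Phi(X)\cap\FL_d=\emptyset$ is exactly right and matches the paper.
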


	We now explain the construction of the space $Y$ and the map $i$ in Theorem \ref{main1}, and hence give a sketch of the proof of Theorem \ref{main1}. We will use the 
	{\em reciprocal  multiplier spectrum morphism}.  The precise definition is given in Section \ref{2}. Basically, for fixed integers $m\geq n\geq 1$ and $f\in \Rat_d$, we collect all periodic points of $f$ with exact periods $j$, $n\leq j\leq m$. Assume that $f$ has no super-attracting cycle with exact periods $j$, $n\leq j\leq m$. Using elementary symmetric polynomials, the reciprocal of the multipliers of these periodic points determine a point in $\C^{N_{n,m}}$, where $N_{n,m}$ is the number of periodic points with exact periods  $j$, $n\leq j\leq m$, counted with multiplicity.  Hence we can define the reciprocal  multiplier spectrum morphism $\tau_{n,m}:\sM_d\setminus Z_{n,m}\to \C^{N_{n,m}}$, where $Z_{n,m}$ is the locus of $f$ having a super-attracting cycle with exact periods  $j$, $n\leq j\leq m$.
	
	 In the setting of Theorem \ref{main1}, we can choose $n$ large enough such that $\Phi(X)$ is contained in $\sM_d\setminus Z_{n,m}$, moreover $\tau_{n,m}(\Phi(X))$ is contained in a bounded domain in $\C^{N_{n,m}}$ by the structural stability.  Let $\FL_d$ be the flexible Latt\`es locus. By a Theorem of McMullen \cite{McMullen1987}, for fixed $n$, the morphism $\tau_{n,m}:\sM_d\setminus (Z_{n,m}\cup \FL_d)\to \C^{N_{n,m}}$ is quasi-finite (which means that every fiber of $\tau_{n,m}$ is a finite set) for $m$ large enough. By a generalized   Zariski's Main Theorem \cite[Th\'eor\`eme 8.12.6]{Grothendieck1966a}, there exists a normal  affine variety $Y$, an open immersion $\eta: \sM_d\setminus (Z_{n,m}\cup \FL_d)\to Y$, and a finite morphism (which means quasi-finite and proper) $\tilde{\tau}_{n,m}:Y\to \C^{N_{n,m}}$,  such that $\tau_{n,m}=\tilde{\tau}_{n,m}\circ \eta$. The map $i$ in Theorem \ref{main1} is given by $i:=\eta\circ \Phi$. We get that $i(X)$ is precompact in $Y$ by the finiteness of $\tilde{\tau}_{n,m}$.
	 
 When $d\geq 4$, using a recent result of the authors about the generic injectivity of multiplier spectrum morphism \cite{ji2023multiplier}, we can  concretely construct the space $Y$ in Theorem \ref{main1} .  
	 
	 \begin{thm}\label{main2}
	 	Let $d\geq 4$. Then the complex analytic space $Y$ in Theorem \ref{main1} can be chosen to be the normalization of the Zariski closure of the  image of the reciprocal  multiplier spectrum morphism $\tau_{n,m}$ for some $m\geq n\geq 1$.
	 \end{thm}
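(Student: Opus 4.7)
The plan is to show that when $d \geq 4$, the abstract variety $Y$ produced by Zariski's Main Theorem in the construction behind Theorem~\ref{main1} is canonically isomorphic to the normalization of the Zariski closure of $\im(\tau_{n,m})$. The input that was unavailable in the general case is the generic injectivity of $\tau_{n,m}$ on $\sM_d$, which is precisely the content of the authors' recent result \cite{ji2023multiplier}, valid for $d \geq 4$.

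Concretely, I would start by fixing $n, m$ simultaneously large enough so that: (i) $\Phi(X) \subseteq \sM_d \setminus Z_{n,m}$; (ii) the morphism $\tau_{n,m}: \sM_d \setminus Z_{n,m} \to \C^{N_{n,m}}$ is quasi-finite on the complement of $\FL_d$ (McMullen); and (iii) $\tau_{n,m}$ is injective on a non-empty Zariski open subset of $\sM_d$ (by \cite{ji2023multiplier}, using $d \geq 4$). With these $n, m$ in hand, I would run exactly the construction from the proof of Theorem~\ref{main1}: by \cite[Th\'eor\`eme 8.12.6]{Grothendieck1966a}, there exist a normal affine variety $Y'$, an open immersion $\eta: \sM_d \setminus (Z_{n,m} \cup \FL_d) \hookrightarrow Y'$, and a finite morphism $\tilde\tau_{n,m}: Y' \to \C^{N_{n,m}}$ with $\tau_{n,m} = \tilde\tau_{n,m} \circ \eta$.

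Let $V := \overline{\tau_{n,m}(\sM_d \setminus Z_{n,m})}^{\zar} \subset \C^{N_{n,m}}$ and let $V^\nu$ denote its normalization. I would identify $Y' \cong V^\nu$ as follows. Since $\eta$ is an open immersion with dense image into the irreducible variety $Y'$, and $\tilde\tau_{n,m}$ is finite (hence closed), the image $\tilde\tau_{n,m}(Y')$ equals the Zariski closure of $\tau_{n,m}(\sM_d \setminus (Z_{n,m} \cup \FL_d))$, which is $V$ (the lower-dimensional piece $\tau_{n,m}(\FL_d \setminus Z_{n,m})$ is absorbed into this closure). Next, generic injectivity of $\tau_{n,m}$ forces $\tilde\tau_{n,m}: Y' \to V$ to be birational. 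Finally, by the universal characterization of normalization, a finite birational morphism from a normal integral variety to an integral variety is canonically isomorphic, over the target, to its normalization; hence $Y' \cong V^\nu$.

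With this identification, the map $i := \eta \circ \Phi: X \to V^\nu$ is a holomorphic injection, and $i(X)$ is precompact in $V^\nu$ because its image under the finite morphism $V^\nu \to \C^{N_{n,m}}$ coincides with $\tau_{n,m}(\Phi(X))$, which lies in a bounded subset of $\C^{N_{n,m}}$ by the structural stability argument from Theorem~\ref{main1}. The main technical point is the identification step: one must verify that the variety produced by Zariski's Main Theorem coincides with the normalization of the Zariski closure precisely when the quasi-finite morphism is generically injective; once this is granted, the remainder is book-keeping with results already recorded in the excerpt.
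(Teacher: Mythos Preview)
Your proposal is essentially the paper's proof: factor $\tau_{n,m}$ through the Zariski Main Theorem compactification $Y'$, use generic injectivity to see that the finite map $\tilde\tau_{n,m}:Y'\to V$ is birational, and then invoke normality of $Y'$ to identify $Y'$ with $V^\nu$. One refinement the paper makes explicit and that you gloss over in step~(iii): the main theorem of \cite{ji2023multiplier} literally yields generic injectivity only for $n=1$, whereas here $n$ must be taken large enough to clear any super-attracting cycles present in the family $\Phi$; the paper therefore first proves an extension (Theorem~\ref{genericinjective}) of \cite{ji2023multiplier} to arbitrary $n\ge 1$, by the same method, before carrying out exactly the identification you describe.
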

	
	We believe that the restriction  $d\geq 4$ in Theorem \ref{main2} is unnecessary.
		\subsection{Strcture of the paper}
	In Section \ref{2} we give the definition of the reciprocal  multiplier spectrum morphism. The proof of Theorem \ref{main1} and Conjecture \ref{mcmullen} is given in Section \ref{3}. The proof of Theorem \ref{main2} is given in Section \ref{4}. 
	
	\subsection*{Acknowledgement}
	The first-named author would like to thank Beijing International Center for Mathematical Research in Peking University for the invitation.  The first named author Zhuchao Ji is supported by ZPNSF grant (No.XHD24A0201). The second-named author Junyi Xie is supported by NSFC Grant (No.12271007).
	
	\section{The reciprocal  multiplier spectrum morphism}\label{2}
	In this section we will define  the reciprocal  multiplier spectrum morphism $\tau_{n}$, and we will show that $\tau_{n}$ are quasi-finite by using McMullen's rigidity theorem \cite[Theorem 2.2]{McMullen1987}.
	\begin{defi}
	Let $f$ be a rational map of degree $d$ and let $x$ be a periodic point of $f$.  We say an integer $n\geq 1$ is a formal exact  periods of $x$ if one of the following holds:
	\begin{points}
		\item $n$ is the minimal integer such that $f^n(x)=x$;
		\item $n=mr$ and $df^m(x)$ is a primitive $r$-th root of unity, where $m$ is the minimal integer such that $f^m(x)=x$.
	\end{points}

	\end{defi}

	By definition, for every periodic point $x$ of $f$, it has at most two formal exact periods. 
	
	Let $\left\{x_1,\dots, x_{N_n}\right\}$ be the multiset of periodic points of $f$ with formal exact periods $n$, counted with multiplicity.  The multipliers $df^n(x_i)$ of these points determine an element $s_n(f)\in \C^{N_n}/ S_{N_n}$, where $S_{N_n}$ is the symmetric group which acts on $\C^{N_n}$ by permuting the coordinates.  It was shown in \cite[Theorem 4.50]{Silverman2007} that  
	\begin{equation*}
		[f]\mapsto s_n(f)
	\end{equation*}
defines a morphism on  $\sM_d$.
	
	\medskip
	Let $m\geq n\geq 1$ be two integers. Let $\rho_{n,m}:\sM_d\to \C^{N_n}/ S_{N_n}\times\cdots\times \C^{N_m}/ S_{N_m}$ be the morphism 
		\begin{equation*}
		[f]\mapsto (s_n(f),\dots, s_m(f)).
	\end{equation*}

Let $W_{n,m}$ be the Zariski closed set $\left\{(f,g): \rho_{n,m}(f)=\rho_{n.m}(g)\right\}\subset \sM_d\times \sM_d$. Then for fixed $n$, $W_{n,m}$ is a decreasing sequence with respect to $m$.
By Noetheriality there exists a minimal $N=N(d,n)\geq n$ such that  $$\bigcap_{m\geq n}W_{n,m}=W_{n,N}.$$  The following is a consequence of McMullen's rigidity theroem \cite[Theorem 2.2]{McMullen1987}. 
\begin{thm}\label{quasifinite}
For every  $d\geq 2$ and $n\geq 1$,  We set $\rho_{n}:=\rho_{n,N}$.  Then $\rho_{n}$ is quasi-finite on  $\sM_d\setminus \FL_d$.
\end{thm}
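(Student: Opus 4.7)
The plan is to deduce the quasi-finiteness of $\rho_n$ from McMullen's rigidity theorem via the Noetherian stabilization that defines $N$.

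First, I would re-interpret the stabilization property set-theoretically. Since $W_{n,m}$ is precisely the graph of the fiber equivalence relation of $\rho_{n,m}$, the identity $W_{n,m} = W_{n,N}$ for every $m \geq N$ means that $\rho_{n,m}$ and $\rho_{n,N} = \rho_n$ induce the same partition of $\sM_d$ into fibers. Consequently, for any $[f] \in \sM_d$ and any $m \geq N$,
\[
\rho_n^{-1}(\rho_n([f])) = \rho_{n,m}^{-1}(\rho_{n,m}([f])).
\]

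Next, I would invoke McMullen's rigidity theorem from \cite{McMullen1987}. The form needed here (which is the same form the authors use in Section 1 for the closely related reciprocal morphism $\tau_{n,m}$) is that, for each fixed $n$, there is an integer $M = M(d,n)$ such that for every $m \geq M$, the morphism $\rho_{n,m}$ is quasi-finite on $\sM_d \setminus \FL_d$, meaning that its fibers over non-flexible-Latt\`es points are finite. Choosing any $m$ with $m \geq \max(N, M)$, the displayed identity identifies the fibers of $\rho_n$ over $\sM_d \setminus \FL_d$ with the finite fibers of $\rho_{n,m}$, and the theorem follows. Equivalently, for any $[g]$ in the same fiber as $[f] \in \sM_d \setminus \FL_d$, the pair $([f],[g])$ belongs to $W_{n,N}=\bigcap_{m\geq n}W_{n,m}$, so $s_j(f)=s_j(g)$ for every $j\geq n$, and now McMullen's rigidity forces only finitely many such $[g]$.

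The main obstacle is the precise invocation of McMullen's theorem: it is stated in \cite{McMullen1987} in terms of the multiplier spectrum taken over individually marked periodic points, and one must check that passing to the symmetric-function version $\rho_{n,m}$, and truncating the period range to an interval $[n,m]$ of sufficient length, still yields quasi-finiteness away from $\FL_d$. Both reductions are standard—marking introduces only a finite symmetric-group ambiguity, which preserves quasi-finiteness, and the stabilization step absorbs any loss from restricting the period range—so once these routine checks are performed, the proof reduces to the two-line identification above.
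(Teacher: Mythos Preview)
Your proposal is correct and uses the same two ingredients as the paper (Noetherian stabilization and McMullen's rigidity), but you have packaged the substantive step as a citation rather than an argument. The form of McMullen you invoke---that there exists $M(d,n)$ with $\rho_{n,m}$ quasi-finite on $\sM_d\setminus\FL_d$ for all $m\geq M$---is not a theorem stated in \cite{McMullen1987}; what is stated there (Theorem~2.2) is that a \emph{stable algebraic family} is either isotrivial or flexible Latt\`es. The paper's proof is exactly the passage from that statement to quasi-finiteness: assume a positive-dimensional fiber of $\rho_n$, take an algebraic curve $V$ inside it, use the stabilization $W_{n,N}=\bigcap_{m\geq n}W_{n,m}$ to see that $\rho_{n,m}$ is constant along $V$ for \emph{every} $m\geq n$, deduce that the number of attracting cycles is bounded along $V$ so that the family is structurally stable by \cite[Theorem~4.2]{mcmullen2016complex}, and only then apply \cite[Theorem~2.2]{McMullen1987} to reach a contradiction. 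This is precisely what you flag as the ``main obstacle'' and dismiss as routine; it \emph{is} routine, but it is also the entire content of the proof. Your ``equivalently'' reformulation at the end (pairs in the fiber share $s_j$ for all $j\geq n$, then invoke rigidity) is the cleaner way to say it and matches the paper's argument once one notes that McMullen's input is Theorem~2.2 rather than a ready-made quasi-finiteness statement.
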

\begin{proof}
Assume by contradiction that $\rho_{n}$ is not  quasi-finite on  $\sM_d\setminus \FL_d$.  Then there exists an algebraic family of rational maps $\phi:V\to \Rat_d$, $t\mapsto f_t$, parametrized by the  algebraic curve  $V$ such that $\rho_n(f_t)$ is constant, $\pi\circ \phi:V\to \sM_d$ is non-constant with $\pi\circ\phi(V)\cap \FL_d=\emptyset$, where $\pi:\Rat_d\to \sM_d$ is the canonical projection.  

By the definition of $\rho_n$, $\rho_{n,m}(f_t)$ is constant for every $m\geq n$. This implies the multipliers of periodic points with large exact periods are constant in this family.  In particular the number of attracting periodic points is bounded in this family. This implies  $\phi:V\to \Rat_d$ is a structurally stable algebraic family \cite[Theorem 4.2]{mcmullen2016complex}.  By \cite[Theorem 2.2]{McMullen1987}, $\phi:V\to \Rat_d$  is a flexible Latt\`es family, which is a contradiction.
\end{proof}
	\medskip

\medskip
We now define the reciprocal  multiplier spectrum morphism.  Let $n\geq 1$ be an integer. Let $Z_n\subset \sM_d$ be the subvariety containing  rational map with a super-attracting periodic point with exact periods $n$.  We let $\delta_n:\sM_d\setminus Z_n\to \C^{N_n}$ be the morphism
\begin{equation*}
[f]\to( \sigma_1(\alpha_1,\dots, \alpha_n),\dots,  \sigma_{N_n}(\alpha_1,\dots, \alpha_n)),
\end{equation*}
where $\alpha_i:=df^n(x_i)^{-1}$ and $\sigma_i$ is the  $i$-th elementary symmetric polynomial of $N_n$ variables.

\medskip
Let $m\geq n\geq 1$ be two integers.  Let $Z_{n,m}\subset \sM_d$ be the subvariety containing  rational map with a super-attracting periodic point with exact periods $j$ such that $n\leq j\leq m$. We let  $\tau_{n,m}: \sM_d\setminus Z_{n,m}\to \C^{N_n}\times \cdots\times \C^{N_m}$  be the morphism given by
\begin{equation*}
	[f]\mapsto (\delta_n(f),\dots,\delta_m(f)).
\end{equation*}

\begin{defi}\label{reci}
The $n$-th reciprocal  multiplier spectrum morphism is defined as $\tau_{n}:=\tau_{n,N}$, where $N=N(d,n)$ is the same as in Theorem \ref{quasifinite}.

\end{defi}

\begin{cor}\label{quasifinitereci}
For every  $d\geq 2$ and $n\geq 1$,  $\tau_{n}$ is quasi-finite on  $\sM_d\setminus (Z_{n,N}\cup \FL_d)$.
\end{cor}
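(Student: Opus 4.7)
The plan is to deduce the corollary directly from Theorem \ref{quasifinite} by checking that, on the common domain of definition, the morphisms $\tau_{n,m}$ and $\rho_{n,m}$ have exactly the same fibers. Quasi-finiteness depends only on the fibers being finite, so this reduction will be enough.

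First I would fix two points $[f], [g] \in \sM_d \setminus Z_{n,m}$ and unwind what it means for them to have the same image under $\rho_{n,m}$. By construction, this is equivalent to saying that for each $j$ with $n \leq j \leq m$, the elementary symmetric polynomials in the multipliers $df^j(x_i)$ and $dg^j(y_i)$ agree (the $x_i$, resp.\ $y_i$, ranging over the periodic points of formal exact period $j$, counted with multiplicity). By Vieta's formulas this is in turn equivalent to the two multisets of multipliers coinciding.

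Next I would observe that since $[f],[g] \in \sM_d \setminus Z_{n,m}$, none of these multipliers is zero, so the map $\alpha \mapsto \alpha^{-1}$ is a bijection between the multisets. Consequently the multisets of multipliers agree if and only if the multisets of reciprocal multipliers agree, which, again by Vieta's formulas, happens if and only if the elementary symmetric polynomials of the reciprocals agree for every $j \in [n,m]$. That last condition is precisely $\tau_{n,m}(f) = \tau_{n,m}(g)$. Thus $\rho_{n,m}$ and $\tau_{n,m}$ have the same fibers on $\sM_d \setminus Z_{n,m}$.

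Specializing to $m = N = N(d,n)$, the maps $\rho_n$ and $\tau_n$ have the same fibers on $\sM_d \setminus Z_{n,N}$. Theorem \ref{quasifinite} gives that $\rho_n$ has finite fibers on $\sM_d \setminus \FL_d$, and intersecting with $\sM_d \setminus Z_{n,N}$ yields the claim for $\tau_n$. There is essentially no hard step: the only thing to verify carefully is that removing $Z_{n,N}$ is exactly what one needs to legitimize the reciprocal operation, and this matches the definition of $Z_{n,N}$.
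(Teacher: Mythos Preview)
Your proof is correct and follows the same approach as the paper, which dispatches the corollary in a single sentence by noting that elementary symmetric polynomials give an isomorphism $\C^N/S_N \cong \C^N$. You have simply made explicit the two ingredients hiding in that one-liner: the symmetric-polynomial isomorphism identifies each side with the multiset of (reciprocal) multipliers, and the reciprocal map is a bijection on $\C^*$ once $Z_{n,N}$ is removed, so $\rho_n$ and $\tau_n$ have the same fibers there.
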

\begin{proof}
This a corollary of Theorem \ref{quasifinite}, since elementary symmetric polynomials give an isomorphism between $\C^N/S_N$ and $\C^{N}$.
\end{proof}
	\section {Proof of Theorem \ref{main1} and Conjecture \ref{mcmullen}}\label{3}
	We begin with a lemma.
\begin{lemma}\label{lattesstable}
Let $d\geq 2$.	Let $X$ be a connected complex analytic space and let $\Phi:X\to \sM_d$ be a holomorphic map which is  structurally stable, such that $\Phi(X)\cap \FL_d\neq \emptyset$. Then $\Phi(X)\subset \FL_d$.
\end{lemma}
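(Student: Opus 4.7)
The plan is to establish that $A := \Phi^{-1}(\FL_d)$ is both closed and open in $X$; combined with the hypothesis $A \neq \emptyset$ and connectedness of $X$, this forces $A = X$. Closedness is immediate from continuity of $\Phi$ together with the fact, recalled in the introduction, that $\FL_d$ is a Zariski closed algebraic subset of $\sM_d$ (empty when $d$ is not a square, otherwise a finite union of algebraic curves).

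For openness, I fix $t_0 \in A$ and a connected neighborhood $U$ of $t_0$ on which $\Phi$ lifts to a holomorphic family $\tilde{\Phi} : U \to \Rat_d$, $t \mapsto f_t$, with $f_{t_0}$ flexible Latt\`es. Because $f_{t_0}$ is Latt\`es its Julia set equals all of $\P^1(\C)$, so the structural stability of the lifted family supplies, for each $t \in U$, a global quasiconformal homeomorphism $\phi_t : \P^1 \to \P^1$ satisfying $\phi_t \circ f_{t_0} = f_t \circ \phi_t$. Consequently each class $[f_t]$ belongs to the subset $\Psi(\sM_{f_{t_0}}) \subset \sM_d$ of $\PGL_2(\C)$-classes qc-conjugate to $f_{t_0}$ (see (\ref{map})), and the task reduces to showing that this subset is contained in $\FL_d$.

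To verify this, I write $f_{t_0} = \pi \circ L \circ \pi^{-1}$ in its flexible Latt\`es normal form \cite[Section 5]{milnor2006lattes}, where $\pi : E_0 = \C/\Lambda_0 \to \P^1$ is the quotient by $\pm 1$ and $L$ is an affine endomorphism of $E_0$ with integer linear part $n$, so that $d = n^2$. The $f_{t_0}$-invariant Beltrami differential of $\phi_t$ pulls back along $\pi$ to an $L$-invariant Beltrami differential on $E_0$; solving it via the measurable Riemann mapping theorem on the torus yields a qc map $\tilde\phi_t : E_0 \to E_t' = \C/\Lambda_t'$ whose conjugate $L_t' := \tilde\phi_t \circ L \circ \tilde\phi_t^{-1}$ is holomorphic, hence affine, and retains the integer linear part $n$ by homotopy invariance on $H_1$. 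Descending through the new quotient $\pi_t' : E_t' \to \P^1$ then exhibits $f_t$ as a flexible Latt\`es map, so $[f_t] \in \FL_d$ and $U \subset A$.

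The main obstacle lies precisely in this descent: one has to check that the pullback of $\phi_t$'s Beltrami differential through the ramified double cover $\pi$ is still an $L$-invariant element of $\Bel(L)$ with $L^\infty$ norm $<1$, and that the resulting torus self-map $L_t'$ genuinely preserves the linear part $n$ (and not merely its absolute value). Once these points are handled, the remainder of the argument is standard: the clopen plus connectedness topology step, together with the well-known identification of structural stability with qc-conjugacy on Julia dynamics, closes the proof.
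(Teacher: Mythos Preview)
Your proof is correct but follows a different route from the paper's. The paper argues globally rather than via a clopen decomposition: having fixed $t_0$ with $\Phi(t_0)\in\FL_d$, it uses that structural stability over the connected base $X$ yields qc-conjugacy on Julia sets between $\Phi(t_0)$ and \emph{every} $\Phi(t)$; since $J(\Phi(t_0))=\P^1$, this conjugacy is global, so every $\Phi(t)$ is PCF with the same critical portrait as $\Phi(t_0)$. Thurston's rigidity theorem \cite[Theorem 6.2]{McMullen1987} then forces the family to be either constant or contained in $\FL_d$, and in either case $\Phi(X)\subset\FL_d$. You instead trade this black box for a hands-on verification that any qc-deformation of a flexible Latt\`es map is again flexible Latt\`es, by lifting the invariant Beltrami differential through $\pi:E_0\to\P^1$ and solving on the torus. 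The obstacles you flag are real but routine: pullback through a holomorphic map preserves $\|\cdot\|_\infty$ (the transformation factor $\overline{\pi'}/\pi'$ has modulus one off the branch locus), $L$-invariance upstairs follows from $f_{t_0}$-invariance downstairs via the semiconjugacy, and the linear part of $L_t'$ is determined by its action on $H_1(E_0;\Z)$, hence is a qc-invariant. Your argument is more self-contained and makes the elliptic-curve geometry explicit; the paper's is shorter but leans on Thurston rigidity. One small asymmetry worth noting: your clopen strategy needs $\FL_d$ to be closed in $\sM_d$, which you borrow from the introduction, whereas the paper's global argument sidesteps this entirely.
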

\begin{proof}
Pick $t_0\in \Phi(X)\cap \FL_d$.  Since $\Phi$ is structurally stable,  by \cite[Theorem 4.2]{mcmullen2016complex}, for every $t\in X$, $\Phi(t)$ and $\Phi(t_0)$ are quasiconformally conjugate on their Julia sets (which is equal to $\P^1(\C))$).  In particular all $\Phi(t)$ are postcritically finite (PCF), which means  that the critical orbits are finite) with the same critical orbits relation as $\Phi(t_0)$. Hence  $\Phi:X\to \sM_d$  is a holomorphic family of PCF maps. By Thurston's rigidity theorem \cite[Theorem 6.2]{McMullen1987}, either $\Phi(X)$ is a single point or $\Phi(X)\subset \FL_d$. In all these two cases we have $\Phi(X)\subset \FL_d$.
\end{proof}
\proof[Proof of Theorem \ref{main1}.]
Since $\Phi$ is strcturally stable, by \cite[Theorem 4.2]{mcmullen2016complex}, we can choose $n$ large enough such that all attracting periodic points have exact periods less than $n$ along the family $\Phi:X\to \sM_d$.  Let $\tau_n$ be the  reciprocal  multiplier spectrum morphism given in Definition \ref{reci}. By Lemma \ref{lattesstable},  $\Phi(X)\subset\sM_d\setminus (Z_{n,N} \cup \FL_d) $, where $N=N(d,n)$ is given in Theorem \ref{quasifinite}. By Corollary \ref{quasifinitereci}, $\tau_n: \sM_d\setminus (Z_{n,N}\cup \FL_d) \to \C^l$ is quasi-finite, $l\geq 1$.  By our construction $\tau_n\circ \Phi (X)$ is contained in a bounded domain in $\C^l$.

By a generalized  version of Zariski's  Main Theorem \cite[Th\'eor\`eme 8.12.6]{Grothendieck1966a}, there is a normal  affine variety $Y$, an open immersion $\eta: \sM_d\setminus (Z_{n,N}\cup \FL_d)\to Y$, and a finite morphism $\tilde{\tau}_{n}:Y\to \C^{l}$,  such that $\tau_{n}=\tilde{\tau}_{n}\circ \eta$.  We define $i:=\eta\circ \Phi: X\to Y$. Then $i$ is a holomorphic injection. We need to show that  $i(X)$ is precompact in $Y$. We have $\tilde{\tau}_{n}(i(X))=\tilde{\tau}_{n}\circ \eta\circ \Phi(X)=\tau_n\circ\Phi(X)$, which is precompact in $\C^l$. Since $\tilde{\tau}_{n}:Y\to \C^l$ is finite (hence proper), 
we know that $i(X)$ is precompact in $Y$, as $i(X)\subset \tilde{\tau}_{n}^{-1}(\tau_n\circ \Phi (X))$ .

Since $Y$ is normal and affine, $Y$ can be embedded in $\C^q$ as a Zariski closed subset, for some $q\geq 1$. The coordinates functions $z_1,\dots, z_q$ define bounded holomorphic injective  functions on $i(X)$.  Since $i:X\to i(X)$ is biholomorphic, for $1\leq j\leq q$, $z_j\circ i$ are bounded  holomorphic injective  functions on $X$, which clearly separate points in $X$. This implies that $X$ is Carath\'eodory hyperbolic. This finishes the proof.

\endproof

\proof[Proof of Conjecture \ref{mcmullen}.]

Let $\Psi$ be the holomorphic injection given in (\ref{map}), then $\Psi:\sM_f\to \sM_d$ is structurally stable since  elements  in $\Psi(\sM_f)$ are quasiconformally conjugate. Since $f\notin \FL_d$, by Theorem \ref{main1}, $\sM_f$ is Carath\'eodory hyperbolic.

\endproof

	\section {Proof of Theorem \ref{main2}}\label{4}
We first show the follwoing result, which is a generalization of the main theorem (Theorem 1.3) in \cite{ji2023multiplier}. Theorem 1.3 in \cite{ji2023multiplier} corresponds to the case $n=1$ in the following result.  The proof follows the same line as in the proof in \cite{ji2023multiplier}. Recall  that we have shown that for every $d\geq 2$ and $n\geq 1$,  the two morphisms $\rho_n$ and $\tau_n$  are quasi-finite (after excluding $\FL_d$), see Theorem \ref{quasifinite} and Corollary \ref{quasifinitereci}. 
\begin{thm}\label{genericinjective}
For every $d\geq 4$ and $n\geq 1$, $\rho_n$ and $\tau_n$ are generically injective, i.e. they are injective morphisms when  restricted on a non-empty Zariski open subset. 
\end{thm}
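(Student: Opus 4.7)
The plan is to follow the proof of \cite[Theorem 1.3]{ji2023multiplier}, which handles the case $n=1$, and to indicate the modifications needed for general $n\geq 1$. As a preliminary reduction, the elementary symmetric polynomials give an isomorphism $\C^{N_j}/S_{N_j}\cong \C^{N_j}$, so on the common domain $\sM_d\setminus Z_{n,N}$ the morphisms $\rho_n$ and $\tau_n$ determine one another by passing between the roots and the coefficients of a polynomial (and taking reciprocals where needed). Since generic injectivity is a Zariski-open condition, it suffices to prove generic injectivity of $\rho_n$.

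Form the correspondence
$$\Gamma_n:=\{([f],[g])\in (\sM_d\setminus \FL_d)^2 : \rho_n(f)=\rho_n(g)\}.$$
By Theorem \ref{quasifinite}, both projections $\Gamma_n\to \sM_d\setminus \FL_d$ are quasi-finite, and the diagonal $\Delta\subset \Gamma_n$ is an irreducible component of dimension $2d-2$. Generic injectivity of $\rho_n$ is equivalent to the assertion that every other irreducible component of $\Gamma_n$ has first projection contained in a proper Zariski closed subset of $\sM_d\setminus \FL_d$. Suppose for contradiction that some component $W\neq \Delta$ dominates $\sM_d\setminus \FL_d$; then for a generic $[f]$ there exists $[g]\neq [f]$ with $\rho_n(f)=\rho_n(g)$, and hence, by the definition of $N=N(d,n)$, sharing the multiplier at every periodic cycle of formal exact period at least $n$.

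The strategy of \cite{ji2023multiplier} is to specialize $(f,g)$ to a judiciously chosen critically marked one-parameter algebraic family, in which the hypothesis $d\geq 4$ is used to guarantee enough independent movable critical points, and then to apply McMullen's rigidity theorem (the same input as in Theorem \ref{quasifinite}) to conclude that a non-trivial algebraic family of non-Latt\`es maps with frozen multiplier spectrum at all sufficiently large periods cannot exist. This skeleton transfers to general $n$ because the rigidity input comes from multipliers at arbitrarily large periods, all of which are still recorded by $\rho_n$; the small-period data that $\rho_n$ discards plays no essential role once attention is restricted to the range $[n,N(d,n)]$.

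The main obstacle is the careful bookkeeping required to check that every dimension estimate and specialization step of \cite{ji2023multiplier} survives when the period range is shifted from $[1,N(d,1)]$ to $[n,N(d,n)]$. Specifically, any intermediate step that uses the count $N_j$ of period-$j$ cycles to control the codimension of a parameter locus must be re-examined with $j\geq n$; this causes no difficulty because $N_j$ grows like $d^j$, so truncating finitely many low-period terms does not affect the asymptotic counts, and because $N(d,n)$ may be taken as large as needed by its definition as the stabilization index of the chain $W_{n,m}$. Once this verification is in place, the remainder of the $n=1$ argument proceeds verbatim and shows that any irreducible component $W\neq \Delta$ of $\Gamma_n$ must be contained in a proper closed subvariety, yielding a non-empty Zariski open subset of $\sM_d$ on which $\rho_n$, and hence $\tau_n$, is injective.
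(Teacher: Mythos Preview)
Your reduction to $\rho_n$ via the symmetric-function isomorphism is fine and matches the paper. But your description of the strategy of \cite{ji2023multiplier} is off in a way that matters, and you miss the one place where the argument for general $n$ genuinely differs from $n=1$.

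First, the mechanism you describe---specialize to a one-parameter family and invoke McMullen's rigidity to rule out a non-Latt\`es family with frozen large-period multiplier spectrum---is precisely what gives quasi-finiteness (Theorem~\ref{quasifinite}), not generic injectivity. In your correspondence $\Gamma_n$, a non-diagonal dominant component $W$ yields for each $t$ a pair $[f_t]\neq[g_t]$ with $\rho_n(f_t)=\rho_n(g_t)$, but both $[f_t]$ and $[g_t]$ \emph{move} with $t$; there is no family with constant multiplier spectrum to which McMullen's theorem applies. The actual argument in \cite{ji2023multiplier} is different: one specializes $f_t$ along a curve $Y_{\underline{n}}$ of maps with $2d-3$ prescribed super-attracting cycles, finds infinitely many parameters where $f_t$ is PCF with $2d-2$ super-attracting cycles, uses the equality of multiplier spectra to force $g_t$ PCF as well, and then invokes an intertwining criterion (this is where $d\geq4$ enters, via \cite[Theorem~3.3]{ji2023multiplier}) together with Thurston rigidity to conclude $[f_t]=[g_t]$, a contradiction.

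Second, the one step that does \emph{not} go through verbatim for general $n$ is the passage from ``$f_t$ is PCF'' to ``$g_t$ is PCF''. This relies on the super-attracting cycles of $f_t$ being visible in $\rho_n$, i.e.\ having exact period at least $n$. The paper handles this by an additional argument: if infinitely many of the PCF specializations $f_t$ had a super-attracting cycle of period $<n$, then (since the periods of all $2d-2$ super-attracting cycles would be uniformly bounded) their Zariski closure would be a positive-dimensional family of PCF maps, contradicting Thurston rigidity. Your proposal asserts that only ``bookkeeping'' about growth of $N_j$ is required, but this period-shift issue is the actual content of the modification and is absent from your sketch.
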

\begin{proof}
Since elementary symmetric polynomials give an isomorphism between $\C^N/S_N$ and $\C^{N}$, it suffices to show that $\rho_n$ is generically injective. 

Two rational maps $f$ and $g$ are called interwined if there is an algebraic curve  $Z\subset \P^1(\C)\times \P^1(\C)$ whose projections to both coordinates are onto,  such that $Z$ is preperiodic by the map $f\times g$.  By \cite[Theorem 3.3]{ji2023multiplier}, there is 
a non-empty Zariski open subset $U$ of $\sM_d$ such that for every $f,g\in \Rat_d(\C)$ with $[f], [g]\in U$,  if $f$ and $g$ are interwined, then $[f]=[g]$. This is the only step that we need the condition $d\geq 4$. 

\medskip

Assume by contradiction that $\rho_n$ is not generically injective. Similar to the contruction in the third paragraph in the proof  of \cite[Theorem 1.3]{ji2023multiplier}, after shrinking $U$, we can construct two  algebraic families $f_V,g_V$  of degree $d$ rational maps parametrized by the same irreducible algebraic curve $V$ such that the following holds:
There exists $\underline{n}:=\left\{n_1,\dots,n_{2d-3}\right\}\in (\N^*)^{2d-3}$, ($\N^*$ stands for the set of positive integers) such that $n_i\geq n$ and we have
\begin{equation}\label{31}
	\Psi_f(V)\subset Y_{\underline{n}}\cap U,\;\text{and} \;\Psi_g(V)\subset U,
\end{equation}\
where $Y_{\underline{n}}\subset \sM_d$ is the algebraic curve  containing all conjugacy classes having $2d-3$ super-attracting cycles with exact periods $n_1,\dots, n_{2d-3}$. The map $\Psi_f:V\to \sM_d$ (similarly for $\Psi_g$) is defined by $t\mapsto [f_t]$. Moreover  we have 
\begin{equation}\label{equequmul}
	\rho_n\circ \Psi_f=\rho_n\circ \Psi_g,
\end{equation}
finally for every $t\in V$, we have
\begin{equation}\label{equnotconj} \Psi_f(t)\neq \Psi_g(t).
\end{equation}
\medskip

By \cite[Lemma 2.4]{ji2023multiplier}, there are infinitely many $t\in V$ such that $f_t$ is  PCF map with $2d-2$ number of distinct super-attracting cycles. We claim that we can further ask that these $f_t$ have no  super-attracting cycles with exact periods less than $n$.  Assume by contradiction that this claim is not true.  Then there exists an infinite set $X\subset \sM_d$ such that every $g\in X$ has $2d-2$ number of distinct super-attracting cycles, moreover the exact periods of these super-attracting cycles are uniformly bounded.  Let $\overline{X}$ be the Zariski closure of $X$. Then $\dim_\C(\overline{X})\geq 1$. Pick an irreducible component $Y$ of $\overline{X}$ with $\dim_\C(Y)\geq 1$.  Then $Y$ is a positive dimensional family of PCF maps. By Thurston's rigicity theorem  \cite[Theorem 6.2]{McMullen1987}, $Y\subset \FL_d$, which is a contradiction, since flexible Latt\`es maps have no super-attracting cycle.

\medskip
 By  (\ref{equequmul}), $\rho_n(f_t)=\rho_n(g_t)$, hence by \cite[Lemma 3.5]{ji2023multiplier},  for such $t$, $g_t$ is also a PCF map. By \cite[Theorem 3.4]{ji2023multiplier}, after shrinking $V$, $f_t$ and $g_t$ are intertwined for every  $t\in V$. By the definition of $U$ and by (\ref{31}),  $\Psi_f(t)=\Psi_g(t)$ for every $t\in V$. This contradicts (\ref{equnotconj}). We then conclude the proof.
\end{proof}
\medskip

\proof[Proof of Theorem \ref{main2}.]
In the proof of Theorem \ref{main1}, we have constructed an affine and normal variety $Y$,  an  open immersion $\eta: \sM_d\setminus (Z_{n,N}\cup \FL_d)\to Y$ and a finite morphism $\tilde{\tau}_{n}:Y\to \C^{l}$,  such that $\tau_{n}=\tilde{\tau}_{n}\circ \eta$. Let $d\geq 4$.  By Theorem \ref{genericinjective}, $\tau_n$ is generically injective on $ \sM_d\setminus Z_{n,N}$. This implies that $\tilde{\tau}_{n}$ is generically injective on $Y$.  Since $Y$ is normal, $\tilde{\tau}_{n}:Y\to \tilde{\tau}_{n}(Y)$ is a normalization.  It remains to show  that $\tilde{\tau}_{n}(Y)$ is the Zariski closure of $\tau_n(\sM_d\setminus Z_{n,N})$.  Let $Z$ be the Zariski closure of $\tau_n(\sM_d\setminus Z_{n,N})$, which is also the Zariski closure of $\tau_n(\sM_d\setminus (Z_{n,N}\cup \FL_d))$. Since $\tilde{\tau}_{n}(Y)$ is closed and $\tau_{n}=\tilde{\tau}_{n}\circ \eta$, we have $\tau_n(\sM_d\setminus Z_{n,N})\subset \tilde{\tau}_{n}(Y)$. Since $\tilde{\tau}_{n}(Y)$ is closed, we have $Z\subset \tilde{\tau}_{n}(Y)$. On the other hand, since $\eta( \sM_d\setminus (Z_{n,N}\cup \FL_d))$ is dense in $Y$, then $Z=\tilde{\tau}_{n}\circ \eta (\sM_d\setminus (Z_{n,N}\cup \FL_d))$ is dense in  $\tilde{\tau}_{n}(Y)$. This implies $Z=\tilde{\tau}_{n}(Y)$. The proof is finished.

\endproof

\end{document}